\numberwithin{equation}{section}
\theoremstyle{plain}
\newtheorem*{rep@theorem}{\rep@title}
\newcommand{\newreptheorem}[2]{%
\newenvironment{rep#1}[1]{%
 \def\rep@title{#2 \ref{##1}}%
 \begin{rep@theorem}}%
 {\end{rep@theorem}}}
\newtheorem{theorem}[equation]{Theorem}
\newtheorem{proposition}[equation]{Proposition}
\newtheorem{lemma}[equation]{Lemma}
\newtheorem{claim}[equation]{Claim}
\theoremstyle{remark}
\newtheorem{remark}[equation]{Remark}
\theoremstyle{definition}
\newtheorem{definition}[equation]{Definition}
\newtheorem*{question*}{Question}
\newcommand{\A}{{\mathcal A}}
\newcommand{\R}{\mathbb R}
\newcommand{\eps}{\varepsilon}
\providecommand{\abs}[1]{\lvert #1\rvert}
\begin{document}

\title[The moduli space of two-convex embedded tori]{The moduli space of two-convex embedded tori}
\author{Reto Buzano \and Robert Haslhofer \and Or Hershkovits}

\date{\today \thanks{R.B. has been supported by EPSRC grant EP/M011224/1. R.H. has been supported by NSERC grant RGPIN-2016-04331, NSF grant DMS-1406394 and a Connaught New Researcher Award. O.H. has been supported by an AMS-Simons travel grant.}}

\begin{abstract}
In this short article we investigate the topology of the moduli space of two-convex embedded tori $S^{n-1}\times S^1\subset \mathbb{R}^{n+1}$. We prove that for $n\geq 3$ this moduli space is path-connected, and that for $n=2$ the connected components of the moduli space are in bijective correspondence with the knot classes associated to the embeddings.
Our proof uses a variant of mean curvature flow with surgery developed in our earlier article \cite{BHH1} where neck regions are deformed to tiny strings instead of being cut out completely, an approach which preserves the global topology, embeddedness, as well as two-convexity. 
\end{abstract}

\maketitle


\section{Introduction}
The goal of this short article is to extend the results from our previous paper \cite{BHH1} to the case of two-convex embedded tori. In our previous paper we considered the moduli space of two-convex embedded spheres, i.e. the space
\begin{equation}
\mathcal{M}^{2-\textrm{conv}}(S^{n})=\textrm{Emb}^{2-\textrm{conv}}(S^n,\mathbb{R}^{n+1})/\textrm{Diff}(S^n)
\end{equation}
equipped with the smooth topology, where $\textrm{Emb}^{2-\textrm{conv}}\subset \textrm{Emb}$ denotes the space of smooth embeddings with the property that the sum of the smallest two principal curvatures is positive at every point. We proved that $\mathcal{M}^{2-\textrm{conv}}(S^{n})$ is path-connected in every dimension $n$, and conjectured that $\mathcal{M}^{2-\textrm{conv}}(S^{n})$ is actually contractible in every dimension $n$. This was inspired in part by Hatcher's proof of the Smale conjecture \cite{Hatcher,Smale} and by related work of Marques on the moduli space of metrics with positive scalar curvature \cite{Marques}.

Here, we consider the moduli space of two-convex embedded tori, i.e. the space 
\begin{equation}
\mathcal{M}^{2-\textrm{conv}}(S^{n-1}\times S^1)=\textrm{Emb}^{2-\textrm{conv}}(S^{n-1}\times S^1,\mathbb{R}^{n+1})/\textrm{Diff}(S^{n-1}\times S^1)
\end{equation}
equipped with the smooth topology. Recall that by a result of Huisken-Sinestrari \cite{HuiskenSinestrari} the space $\mathcal{M}^{2-\textrm{conv}}(S^{n-j}\times S^j)$ is empty for $2\leq j \leq n-2$, so without loss of generality we can assume $j=1$ right away. A new interesting feature of the moduli space $\mathcal{M}^{2-\textrm{conv}}(S^{n-1}\times S^1)$ compared to $\mathcal{M}^{2-\textrm{conv}}(S^{n})$, is that it can have non-trivial algebraic topology. In fact, some non-trivial algebraic topology can already be spotted at the level of $\pi_0$, and our main theorem gives a complete classification of the path-components of $\mathcal{M}^{2-\textrm{conv}}(S^{n-1}\times S^1)$.

\begin{theorem}\label{thm_main}
The path-components of the moduli space of two-convex embedded tori are given by
\begin{equation}
\pi_0\left(\mathcal{M}^{2-\textrm{conv}}(S^{n-1}\times S^1)\right) \cong
\begin{cases}
\mathcal{K}\left(\mathcal{M}^{2-\textrm{conv}}(S^{1}\times S^1)\right), & n=2,\\
0, & n\geq 3,
\end{cases}
\end{equation}
where $\mathcal{K}$ denotes the set of knot classes. This means that $\mathcal{M}^{2-\textrm{conv}}(S^{n-1}\times S^1)$ is path-connected for $n\geq 3$, while for $n=2$ we have that two mean-convex embedded tori are in the same path-component of their moduli space if and only if they have the same knot class.
\end{theorem}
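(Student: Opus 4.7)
The strategy is to extend the approach of \cite{BHH1} from spheres to tori. Given a two-convex embedded torus $\Sigma_0 \subset \mathbb{R}^{n+1}$, I would run the mean curvature flow with \emph{string surgery} of \cite{BHH1}, in which developing neck regions are deformed to very thin tubes rather than being cut out and capped off. By construction this flow preserves embeddedness, two-convexity and the diffeomorphism type, so at each time the surface remains in $\mathcal{M}^{2-\textrm{conv}}(S^{n-1}\times S^1)$ and the whole evolution defines a continuous path in moduli space starting at $\Sigma_0$.

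After finitely many surgeries the evolving surface should reach a \emph{standard form}: a finite union of nearly round marbles joined by very thin, nearly cylindrical strings. Viewing this configuration as a graph with marble-vertices and string-edges, the underlying diffeomorphism type $S^{n-1}\times S^{1}$ forces exactly one independent cycle, i.e.\ $E=V$, since $S^{n-1}\times S^{1}$ is obtained from $S^{n}$ by a single $1$-handle attachment. Using local two-convex deformations supported near individual marbles and strings, in the spirit of the tree-reduction arguments of \cite{BHH1}, I would then merge adjacent marbles and collapse the tree edges to arrive at a \emph{canonical form}: a single round sphere with exactly one thin loop-string attached at two nearby points.

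Two canonical forms are joined by a path in $\mathcal{M}^{2-\textrm{conv}}$ if and only if the core circles of their loop-strings are ambient isotopic in $\mathbb{R}^{n+1}$. For $n\geq 3$ such a circle has codimension at least $3$, is therefore unknotted, and so all canonical forms lie in one path-component, giving the case $n\geq 3$. For $n=2$ the core is an honest knot in $\mathbb{R}^{3}$, and its isotopy class is a topological invariant of $\Sigma$ (for example, the core of the solid torus bounded by $\Sigma$ on its mean-convex side), hence is locally constant on $\mathcal{M}^{2-\textrm{conv}}(S^{1}\times S^{1})$. Conversely, any ambient isotopy between the core circles of two canonical forms can be thickened to an isotopy of sufficiently thin tubular neighborhoods, which is automatically two-convex, completing the bijection with $\mathcal{K}$.

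The main obstacle is to actually realize the combinatorial simplifications above (merging adjacent marbles, collapsing tree edges, straightening strings, and thickening knot isotopies) through \emph{genuinely two-convex embedded} isotopies, so that one never leaves $\mathcal{M}^{2-\textrm{conv}}(S^{n-1}\times S^{1})$. This will require careful local constructions analogous to those of \cite{BHH1}, now adapted to the single-cycle topology of $S^{n-1}\times S^{1}$; in particular one must choose marble and string scales so that curvature contributions from the cross-section dominate those from the ambient geometry throughout the deformation.
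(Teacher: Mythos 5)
Your proposal tracks the paper's strategy closely: flow with surgery, reduce to a marble configuration with exactly one cycle, shrink to a thin solid torus, and classify by the knot class of the core circle. There are, however, two points worth flagging. First, there is no literal forward-running ``string surgery flow'' that replaces necks by thin tubes and keeps evolving; a thin tube would immediately resingularize under mean curvature flow. What the paper actually does is run the standard Haslhofer--Kleiner surgery flow, which disconnects the surface and changes topology, until extinction, and then constructs the desired $2$-convex isotopy \emph{by backwards induction on the surgery times}, gluing pieces back along tiny strings via the map $\mathcal{G}$ of Theorem~\ref{thm_attachstrings} and transporting the strings along the component isotopies (this is the content of Theorem~\ref{thm_deform_to_marblegraph} and Claim~\ref{claim_inductionstep}). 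Second, for tori the canonical neighborhood theorem permits a type of discarded component that cannot occur in the sphere case: an $\eps$-tubular loop, diffeomorphic to $D^n\times S^1$, which must be deformed directly to a marble circuit by a new isotopy lemma (Lemma~\ref{lemma_loops}); your proposal does not single this case out. Finally, your ``canonical form'' (one marble with a single loop-string attached at two nearby points) is stated slightly differently from the paper's reduction, which keeps the marble circuit and arranges two \emph{antipodal} attachment points per marble before deforming to the thin tube $N_r(\gamma)$; both lead to the same knot-theoretic endgame, and that part of your argument (triviality of codimension-$\geq 3$ knots for $n\geq 3$, ambient isotopy of cores thickened to $2$-convex isotopies of thin tubes for $n=2$) matches the paper.
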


We recall that for $n=2$ the notion of two-convexity simply becomes the more basic notion of mean-convexity, i.e. the property that the mean curvature vector points inwards at every point. The knot class of an embedded torus $T\subset \mathbb{R}^3$ is defined as follows: Choose an embedding map $\varphi: S^1\times S^1\to \mathbb{R}^3$ with $\varphi(S^1\times S^1)= T$ such that $\varphi$ can be extended to an embedding $\bar{\varphi}: D^2\times S^1\to \mathbb{R}^3$ of the solid torus bounded by $T$. The knot class of $T$ is then defined as the knot equivalence class of
\begin{equation}
S^1\to \mathbb{R}^3, e^{i\theta}\mapsto \varphi(1,e^{i\theta})\, .
\end{equation}
The knot class of an embedded torus is of course well defined, i.e. independent of the choice of extendible parameterization, and preserved under isotopies (see Section \ref{sec_prelim}).

The result of Theorem \ref{thm_main} is new even in the seemingly classical case $n=2$. While the space of embedded tori in $\mathbb{R}^3$ is a well studied classical object, it seems that none of the known topological methods deforming such tori into one another can actually preserve mean-convexity, i.e. the only method we know to prove Theorem \ref{thm_main} even in dimension $n=2$ is by using our geometric analytic approach based on mean curvature flow.

Let us now outline the main steps of the proof of Theorem \ref{thm_main}, following the framework from  our previous article \cite{BHH1}.

Given a $2$-convex embedded closed hypersurface  $M_0\subset \mathbb{R}^{n+1}$, we consider its mean curvature flow with surgery $\{M_t\}_{t\in [0,\infty)}$ as provided by the existence theorem from \cite[Thm.~1.21]{HK_surgery}. The flow always becomes extinct in finite time $T<\infty$.  Around all but finitely many times the flow is a smooth mean curvature flow, but at some finite non-empty collection of times surgeries and/or discarding will occur. 

By the canonical neighborhood theorem \cite[Thm.~1.22]{HK_surgery} each discarded component is either a convex sphere of controlled geometry, a capped-off chain of $\eps$-necks or an $\eps$-loop. This information is sufficient to construct an explicit $2$-convex isotopy from any discarded component to what we call a \emph{marble graph} (see Section \ref{sec_marble}). Roughly speaking, a marble graph is a family of disjoint spheres, smoothly glued to one another along tubular neighborhoods of some admissible curves.

While surgeries disconnect the hypersurfaces into different connected components, for our topological application we eventually have to connect the pieces again. To this end,  we use the $2$-convex connected sum operation from \cite{BHH1}. In this construction, two $2$-convex hypersurfaces are glued together along tiny tubes around admissible curves (so-called \emph{strings}) connecting the hypersurfaces (see Section \ref{sec_prelim} for details). If the string radius $r_s$ is chosen to be much smaller than the surgery scales used in the mean curvature flow, then these different scales barely interact. As in \cite{BHH1}, we can therefore argue by backwards induction on the surgery times that at each time every connected component is isotopic via $2$-convex embeddings to a marble graph, see Theorem \ref{thm_deform_to_marblegraph}. In fact, this result holds for any embedded $2$-convex closed hypersurface in $\mathbb{R}^{n+1}$ without topological assumption.

If the initial hypersurface $M_0$ has the topology of a torus, it must thus be isotopic via $2$-convex embeddings to a marble circuit -- a marble graph with only one loop (see Section \ref{sec_marble}). Having constructed a $2$-convex isotopy from the original torus to a marble circuit, in Section \ref{sec_circuit} we finally show that such a circuit is isotopic (again via $2$-convex embeddings) to an arbitrarily thin tubular neighborhood of a knot $\gamma$. Finally, it is easy to see that such thin tubular neighborhoods are always $2$-convex isotopic for $n\geq 3$, while for $n=2$ they are mean-convex isotopic if and only if they represent the same knot class.

\section{Preliminaries}\label{sec_prelim}

The goal of this section is to collect and prove various preliminary results.

We start by explaining that the knot class of an embedded torus is well defined.

\begin{definition}[Knot class]\label{knot_class_def}
Two embedded closed curves $\gamma_i:S^1\rightarrow \mathbb{R}^3$, $i=1,2$ are said to belong to the same (non-oriented) \emph{knot equivalence class} if there exists an ambient isotopy $F:\mathbb{R}^3\times [0,1]\rightarrow \mathbb{R}^3$, such that $F(\gamma_1(e^{i \theta}),1)=\gamma_2(e^{i \theta})$ or $F(\gamma_1(e^{i \theta}),1)=\gamma_2(e^{-i \theta})$. 
\end{definition}

\begin{proposition}\label{prop_knot_class}
Given two diffeomorphisms $F_1,F_2:D^2\times S^1\rightarrow T$, where $T$ is a solid torus in $\mathbb{R}^3$,  the knot class of $F_1(1,e^{i\theta})$ is the same as the knot class of $F_2(1,e^{i \theta})$.
\end{proposition}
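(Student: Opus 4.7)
The strategy is to reduce the equivalence of the knot classes of $F_1(1, e^{i\theta})$ and $F_2(1, e^{i\theta})$ to the classical fact that an essential simple closed curve in a solid torus is ambient-isotopic, inside the torus, to the core up to orientation. Concretely, I would build the chain
\[
F_1(1, e^{i\theta}) \,\sim\, F_1(0, e^{i\theta}) \,\sim\, F_2(0, e^{i\theta}) \,\sim\, F_2(1, e^{i\theta}),
\]
where each $\sim$ denotes a (possibly orientation-reversing) ambient isotopy of $\mathbb{R}^3$.

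For the first and third equivalences, fix $i \in \{1,2\}$ and consider the family of embedded circles $\gamma^i_t(e^{i\theta}) := F_i(t, e^{i\theta})$ for $t \in [0,1]$, viewing $t$ as the real point $t + 0i \in D^2$. Since $F_i$ is a diffeomorphism, each $\gamma^i_t$ is a smooth embedded loop, the family depends smoothly on $t$, and the isotopy connects the boundary curve $F_i(1, e^{i\theta})$ at $t=1$ to the core $F_i(0, e^{i\theta})$ at $t=0$, all supported inside the compact set $T$. Thom's isotopy extension theorem then upgrades this to a compactly supported ambient isotopy of $\mathbb{R}^3$.

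For the middle equivalence, I would pass to the self-diffeomorphism $\phi := F_2^{-1} \circ F_1$ of the solid torus $D^2 \times S^1$. It suffices to construct an isotopy inside $D^2 \times S^1$ from $\phi(\{0\} \times S^1)$ to $\{0\} \times S^1$, possibly reversing the parameterization: pushing such an isotopy forward by $F_2$ and extending via isotopy extension yields the required ambient isotopy in $\mathbb{R}^3$. The key input here is the classical fact that any simple closed curve in $D^2 \times S^1$ generating $\pi_1(D^2 \times S^1) \cong \mathbb{Z}$ is ambient-isotopic, inside $D^2 \times S^1$, to the core $\{0\} \times S^1$ up to orientation. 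Since $\phi$ is a diffeomorphism, $\phi(\{0\} \times S^1)$ is such a curve, representing $\pm 1$ in $\pi_1$.

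The main obstacle --- really the only non-trivial input --- is this uniqueness of the core of a solid torus up to ambient isotopy and orientation. It follows from standard 3-manifold topology, for instance from the fact that the mapping class group of $D^2 \times S^1$ acts on the core by at most an orientation reversal, or from the classification of incompressible annuli in the solid torus via the loop theorem. Granted this, concatenating the three isotopies and absorbing at most one orientation reversal along the way shows that $F_1(1, e^{i\theta})$ and $F_2(1, e^{i\theta})$ lie in the same unoriented knot class, as required by Definition \ref{knot_class_def}.
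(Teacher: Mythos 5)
Your high-level strategy mirrors the paper's: both reduce, via $\phi = F_2^{-1} \circ F_1$ and the isotopy extension theorem, to showing that a standard curve in $D^2 \times S^1$ is isotopic inside the solid torus to its image under a self-diffeomorphism. (The paper works with the boundary curve $(1, e^{2\pi i s})$, you with the core; after the radial push in your first and third steps, the two reductions are equivalent.)

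However, there is a genuine gap in your middle step. The claim that \emph{any} simple closed curve in $D^2 \times S^1$ generating $\pi_1 \cong \mathbb{Z}$ is ambient-isotopic to the core up to orientation is \emph{false}. Generating $\pi_1$ is only a homotopy condition: take the core, isotope a small arc of it into a tiny ball, and replace that arc with a locally knotted one. The result is a simple closed curve still generating $\pi_1$, but viewing the solid torus as standardly embedded in $S^3$ it becomes a nontrivial knot (e.g.\ a trefoil) while the core is unknotted, so the two are not isotopic inside $D^2 \times S^1$. What you actually need is the weaker fact that the image of the core under a \emph{diffeomorphism} of $D^2 \times S^1$ is isotopic to the core up to orientation. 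This is true --- for instance because the complement $D^2\times S^1\setminus \phi(\{0\}\times S^1)$ is diffeomorphic to $T^2\times(0,1]$ --- and is essentially what your alternative appeal to the mapping class group of the solid torus would give, but as written you cite the false general principle, and the correct special case is close to assuming the proposition.

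The paper sidesteps this by being more explicit: instead of citing a classification of curves in the solid torus, it restricts $\widetilde{\phi}$ to the boundary $T^2$, uses Lemma~\ref{lemma_knot} to isotope that restriction to a linear map $\phi_A$ with $c=0$ and $a,d\in\{\pm1\}$, and then checks that the resulting boundary curve $(e^{2\pi i b s}, e^{2\pi i s})$ is manifestly unknotted. Working on the boundary torus is precisely what rules out the knotted counterexamples, since essential simple closed curves on $T^2$ are determined up to isotopy by their homotopy class. To repair your argument, either work with the boundary curve as in the paper, or argue directly that the complement of $\phi(\{0\}\times S^1)$ is $T^2 \times (0,1]$ rather than merely citing $\pi_1$.
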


For the proof of Proposition \ref{prop_knot_class} we need the following lemma.

\begin{lemma}\label{lemma_knot}
Let $\iota: S^1\times S^1 \rightarrow D^2\times S^1$ be the standard inclusion and let $\phi:S^1\times S^1 \rightarrow S^1\times S^1$ be a diffeomorphism. Then there exists a diffeomorphism $\widetilde{\phi}:D^2\times S^1 \rightarrow D^2\times S^1$ extending $\phi$ (i.e $\widetilde{\phi}\circ \iota=\iota\circ\phi$) if and only if the map $p_2(\phi(\cdot,1)):S^1\rightarrow S^1$, where $p_2$ is the projection to the second component, has vanishing degree.
\end{lemma}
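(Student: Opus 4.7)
The plan is to prove both implications separately, relating the extension question to the action of $\phi$ on $H_1$. The structural setup is that the inclusion $\iota: T^2 \hookrightarrow D^2 \times S^1$ induces the homomorphism $\iota_*: \pi_1(T^2) \cong \mathbb{Z}^2 \to \pi_1(D^2 \times S^1) \cong \mathbb{Z}$ given by projection onto the second factor: indeed, $D^2 \times S^1$ deformation retracts to its core $\{0\} \times S^1$, while the meridian $m: z \mapsto (z, 1)$ bounds the meridian disk $D^2 \times \{1\}$.

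For the ``only if'' direction, suppose $\widetilde{\phi}$ extends $\phi$. Then $\widetilde{\phi}(D^2 \times \{1\})$ is an embedded disk in $D^2 \times S^1$ bounded by the curve $\phi(\cdot, 1)$, so the class $\phi_*(m) \in H_1(T^2) = \mathbb{Z}^2$ lies in the kernel of $\iota_*$, meaning its second component vanishes. This component is precisely $\deg(p_2 \circ \phi(\cdot, 1))$, which completes the implication.

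For the ``if'' direction, I would invoke the classical identification $\pi_0(\textrm{Diff}(T^2)) \cong \gl_2(\mathbb{Z})$ via the action on $H_1$. Writing $\phi_* = \left(\begin{smallmatrix} a & c \\ b & d \end{smallmatrix}\right)$, the hypothesis reads $b = 0$, and together with $\det \phi_* = ad = \pm 1$ this forces $a, d \in \{\pm 1\}$. Composing $\phi$ if necessary with the involutions $(z, w) \mapsto (\bar z, w)$ and $(z, w) \mapsto (z, \bar w)$---both of which extend to $D^2 \times S^1$ by the same formulas---reduces us to the case $\phi_* = \left(\begin{smallmatrix} 1 & c \\ 0 & 1 \end{smallmatrix}\right)$. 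Then $\phi$ is isotopic on $T^2$ to the linear twist $L_c(e^{i\alpha}, e^{i\beta}) = (e^{i(\alpha + c\beta)}, e^{i\beta})$, which extends explicitly to $\widetilde{L}_c(z, e^{i\beta}) := (e^{ic\beta}\, z, e^{i\beta})$, well-defined on all of $D^2 \times S^1$ precisely because $c$ is an integer.

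To promote the boundary isotopy $\phi \simeq L_c$ to an extension of $\phi$ itself, I would fix a collar $T^2 \times [0, 1/2] \hookrightarrow D^2 \times S^1$ and define $\widetilde{\phi}$ on the collar via the isotopy (from $\phi$ at $t=0$ to $L_c$ at $t=1/2$), and equal to $\widetilde{L}_c$ on the complementary solid torus. The main technical input---and the step requiring the most care---is the classification $\pi_0(\textrm{Diff}(T^2)) \cong \gl_2(\mathbb{Z})$; once this reduces the extension problem to an algebraic condition on $\phi_*$, the rest of the construction is entirely explicit.
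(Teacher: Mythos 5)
Your proof is correct and takes essentially the same route as the paper's: the ``only if'' direction uses the meridian disk $\widetilde{\phi}(D^2\times\{1\})$ to null-homotope $p_2(\phi(\cdot,1))$ (the paper spells out this same idea as the explicit homotopy $h_t(e^{2\pi i x})=p_2(\widetilde{\phi}(te^{2\pi i x},1))$), and the ``if'' direction likewise invokes $\pi_0(\textrm{Diff}(S^1\times S^1))\cong\textrm{GL}_2(\mathbb{Z})$ to reduce to a linear model, extends the model explicitly, and transfers back to $\phi$ by a collar/isotopy-extension argument. Your extra normalization to $a=d=1$ via the conjugation involutions is unnecessary but harmless, since the paper's formula $\widetilde{\phi}_A(re^{2\pi i x},e^{2\pi i y})=(re^{2\pi i(ax+by)},e^{2\pi i d y})$ handles all sign cases at once.
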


\begin{proof}[{Proof of Lemma \ref{lemma_knot}}]
If there is such an extension $\widetilde{\phi}$ then $h_t(e^{ 2\pi i x})=p_2(\widetilde{\phi}(t e^{2\pi i  x},1))$, where $0\leq t\leq 1$, gives a homotopy between $p_2(\phi(\cdot,1))$ and a constant map, and hence $\textrm{deg}(p_2(\phi(\cdot,1)))=0$.

Conversely, any $\phi\in \textrm{Diff}(S^1\times S^1)$ is isotopic to a map $\phi_A$ of the form
\begin{equation}
\phi_A(e^{2\pi i x},e^{ 2\pi i y})=(e^{ 2\pi i (a x+b y)},e^{ 2\pi i (c x+d y)})
\end{equation}
for some $A=\footnotesize{\Big( \begin{array}{cc}
a & b \\
c &  d  \end{array} \Big)}$ in $\textrm{GL}_2(\mathbb{Z})$.
As the degree is a homotopy invariant, for a map $\phi_A$ isotopic to a diffeomorphism $\phi$ with $\textrm{deg}(p_2(\phi(\cdot,1)))=0$ we must have $c=0$. But then
\begin{equation}
\widetilde{\phi}_A(re^{ 2\pi i  x},e^{ 2\pi i y})=(re^{ 2\pi i  (a x+b y)},e^{2\pi i  d y})
\end{equation}
gives an extension of $\phi_A$. Finally, an extension of $\phi$ is obtained by the isotopy extension property.
\end{proof}

\begin{proof}[{Proof of Proposition \ref{prop_knot_class}}]
Consider $\widetilde{\phi}:=F_2^{-1}\circ F_1\in \textrm{Diff}(D^2\times S^1)$. Note that it suffices to prove that $(1,e^{2\pi i s})$ and $\widetilde{\phi}(1,e^{2\pi i s})$ are equivalent knots in $D^2\times S^1$.

Restrict $\widetilde{\phi}$ to a map $\phi\in\textrm{Diff}(S^1\times S^1)$ such that  $\widetilde{\phi}\circ \iota=\iota\circ\phi$. By the proof of the previous lemma we know that $\phi$ is isotopic to a map of the form
\begin{equation}
\phi_A(e^{ 2\pi i  x},e^{ 2\pi i y})=(e^{ 2\pi i  (a x+b y)},e^{2\pi i  d y})\, ,
\end{equation}
where $a,d\in \{\pm 1\}$ and $b \in \mathbb{Z}$. After possibly reversing orientation (which is allowed by our definition of knot equivalence) we can assume that $d=+1$. By the isotopy extension property we thus see that in $D^2\times S^1$ the knots $\widetilde{\phi}(1,e^{2\pi i s})$ and $(e^{ 2\pi i b s},e^{2\pi i s})$ are equivalent.

But in $D^2\times S^1$ the knot $(e^{ 2\pi i b s},e^{2\pi i s})$ is clearly equivalent to the knot $(0,e^{2\pi i s})$, which is in turn equivalent to the knot $(1,e^{2\pi i s})$. This proves the assertion.
\end{proof}

For the sake of convenience, in this paper, instead of closed embedded hypersurfaces $M\subset\mathbb{R}^{n+1}$ we often talk about the compact domain $K$ bounded by $M$. These two points of view are of course equivalent, since $K$ determines $M=\partial K$, and vice versa.

Given a solid torus $T\subset \mathbb{R}^3$ by Proposition \ref{prop_knot_class} we can associate to it a knot class. Given a family of tori $\mathcal{T}$ we denote by $\mathcal{K}(\mathcal{T})$ the set of knot classes. Clearly if $T_1,T_2\in\mathcal{T}$ are isotopic, then they are in the same knot class. The far less obvious part of the statement of Theorem \ref{thm_main}, which we will prove in the bulk of the paper, is that whenever two mean-convex tori $T_1,T_2$ have the same knot class, then there exists a mean-convex isotopy between them. Generalizing mean-convex domains in $\mathbb{R}^3$, we actually consider $2$-convex domains in $\mathbb{R}^{n+1}$ for general $n$, so let us now discuss some basic notions about the domains we consider.

\begin{definition}
A smooth compact domain $K\subset\mathbb{R}^{n+1}$ is called \emph{$2$-convex}, if 
\begin{equation}
\lambda_1+\lambda_2>0
\end{equation}
at all points $p\in\partial K$. Here, $\lambda_1\leq \lambda_2\leq \ldots\leq \lambda_n$ denote the principal curvatures, i.e.~the eigenvalues of the second fundamental form $A$ of $\partial K$.
\end{definition}

\begin{definition}
A \emph{$2$-convex isotopy} is a smooth family $\{K_t\subset\mathbb{R}^{n+1}\}_{t\in [0,1]}$ of $2$-convex domains. Here smoothness is taken with respect to the smooth topology of submanifolds with boundary in $\mathbb{R}^{n+1}$. We say that two $2$-convex domains $K,K'\subset\mathbb{R}^{n+1}$ are \emph{$2$-convex isotopic} if there is a $2$-convex isotopy $\{K_t\}_{t\in [0,1]}$ such that $K_0=K$  and $K_1=K'$.
\end{definition}

\begin{definition}
We say that an isotopy  $\{K_t\}_{t\in [0,1]}$ is \emph{monotone} if $K_{t_2}\subseteq K_{t_1}$ for $t_2\geq t_1$, and \emph{monotone outside a set $X$} if $K_{t_2}\cap (\R^3\setminus X)\subseteq K_{t_1}\cap (\R^3\setminus X)$ for $t_2\geq t_1$. We call such an isotopy $\{K_t\}_{t\in [0,1]}$ \emph{trivial outside a set $X$} if $K_t\cap (\R^3\setminus X)$ is independent of $t$. 
\end{definition}

The goal for the rest of this section is to recall the gluing theorem (and some other required definitions) from \cite{BHH1}. The input for the gluing map is a controlled configuration of $2$-convex domains and curves given by the following three definitions.

We first recall that a $2$-convex domain $K\subset\mathbb{R}^{n+1}$ is called \emph{$\alpha$-noncollapsed} (see \cite{sheng_wang,andrews1,HK}) if each boundary point $p\in \partial K$ admits interior and exterior balls tangent at $p$ of radius at least $\alpha/H(p)$.

\begin{definition}[{$\mathbb{A}$-controlled domains}]\label{def_contr_dom}
Let $\alpha\in (0,n-1)$, $\beta>0$, $c_H>0$, and $C_A<\infty$. A smooth domain $K\subset\mathbb{R}^{n+1}$ is called \emph{$(\alpha,\beta,c_H,C_A)$-controlled}, if it is $\alpha$-noncollapsed and satisfies
\begin{equation}\label{eq_bound_contr_dom}
H\geq c_H ,\qquad \lambda_1+\lambda_2\geq \beta H,\qquad \abs{A} + \abs{\nabla A}\leq C_A.
\end{equation}
We write $\mathbb{A}=(\alpha,\beta,c_H,C_A)$ to keep track of the constants. We denote by $\mathcal{D}$ the set of all (possibly disconnected) $2$-convex smooth compact domains $K\subset \mathbb{R}^{n+1}$, and we denote by $\mathcal{D}_{\mathbb{A}}=\{K\in\mathcal{D}\;|\;\textrm{$K$ is $\mathbb{A}$-controlled} \}$ the subset of all $\mathbb{A}$-controlled domains.
\end{definition}

\begin{definition}[{$b$-controlled curves}]
Let $b>0$. An oriented compact curve $\gamma\subset \mathbb{R}^{n+1}$ (possibly with finitely many components) is called \emph{$b$-controlled} if the following conditions are satisfied.
\begin{enumerate}[\hspace{3mm}(a)]
\item The curvature vector satisfies $|\kappa|\leq b^{-1}$ and $|\partial_s \kappa|\leq b^{-2}$.
\item Each connected component has normal injectivity radius at least $\frac{1}{10}b$.
\item Different connected components are at least distance $10 b$ apart.  
\end{enumerate} 
We denote by $\mathcal{C}_{b}$ the set of all $b$-controlled curves $\gamma\subset\mathbb{R}^{n+1}$.
\end{definition}

\begin{definition}[Controlled configuration of domains and curves]\label{conrolled_conf}
We call a pair $(D,\gamma)\in \mathcal{D}_{\mathbb{A}}\times \mathcal{C}_{b} $ an \emph{$(\mathbb{A},b)$-controlled configuration} if the following holds.
\begin{enumerate}[\hspace{3mm}(a)]
\item The interior of $\gamma$ lies entirely in $\mathbb{R}^{n+1}\setminus D$.
\item The endpoints of $\gamma$ satisfy the following properties:
\begin{itemize}
\item If $p\in \partial\gamma\cap\partial D$, then $\gamma$ touches $\partial D$ orthogonally there.
\item If $p\in \partial\gamma\setminus\partial D$, then $d(p,\partial D)\geq 10b$.
\item $d \big( \gamma \setminus\bigcup_{p\in \partial\gamma}B_{b/10}(p),\partial D \big)\geq b/20$.
\end{itemize}
\end{enumerate}
We denote by $\mathcal{X}_{\mathbb{A},b}\subseteq \mathcal{D}_{\mathbb{A}}\times \mathcal{C}_{b}$ the set of all $(\mathbb{A},b)$-controlled configurations.
\end{definition}

The gluing map from Theorem \ref{thm_attachstrings} below transforms an $(\mathbb{A},b)$-controlled configuration $(D,\gamma)\in \mathcal{X}_{\mathbb{A},b}$ into a $2$-convex domain $K\in \mathcal{D}$. In order to state precisely how $K$ looks like at loose ends of $\gamma$, we also need the following definition of capped-off tubes from \cite[Def.~2.9]{BHH1}. To this end, first recall that given $\mathbb{A}$ we can fix a suitable standard cap $K^{\textrm{st}}=K^{\textrm{st}}(\mathbb{A})$ as in \cite[Def.~2.8]{BHH1} and \cite[Def.~2.2, Prop.~3.10]{HK_surgery}, which is given as domain of revolution of a suitable concave function $u^\textrm{st}:(-\infty,1]\to\mathbb{R}$ and serves as standard piece for the surgeries.
 
\begin{definition}[Capped-off tube]\label{cap_off_def}
Let $\gamma:[a_0,a_1]\rightarrow \mathbb{R}^{n+1}$ be a connected $b$-controlled curve parametrized by arc-length. A \emph{right} \emph{capped-off tube} of radius $r<b/10$ around $\gamma$ at a point $p=\gamma(s_0)$, where $s_0\in [a_0+r,a_1]$, is the domain 
\begin{align*}
CN^+_{r}(\gamma,p)=
\{\gamma(s)+ u^{\textrm{st}}\big(1-\tfrac{s_0-s}{r}\big)\!\cdot\!  \left(\bar{B}_r^{n+1}\cap \gamma'(s)^{\perp}\right) |\;s\in [a_0,a_1]\}.
\end{align*}
A \emph{left capped-off tube} $CN^-_{r}(\gamma,p)$ is defined analogously. If $p$ is an endpoint of the curve $\gamma$, we simply talk about the \emph{capped-off tube} $CN_r(\gamma,p)$ at $p$, meaning $CN^+_{r}(\gamma,p)$ if $p=\gamma(a_1)$ and $CN^-_{r}(\gamma,p)$ if $p=\gamma(a_0)$.\end{definition}

We can now recall the main gluing theorem from \cite{BHH1}.

\begin{theorem}[Gluing map, Theorem 4.1 of \cite{BHH1}]\label{thm_attachstrings}
There exists a constant $\bar{r}=\bar{r}(\mathbb{A},b)>0$, a smooth rigid motion equivariant map
\begin{equation*}
\mathcal{G}:\mathcal{X}_{\mathbb{A},b}\times (0,\bar{r})\rightarrow \mathcal{D},\quad ((D,\gamma),r)\mapsto \mathcal{G}_{r}(D,\gamma),
\end{equation*}
and a smooth increasing function $\delta:(0,\bar{r})\rightarrow \mathbb{R}_+$ with $\lim_{r\rightarrow 0}\delta(r)=0$, such that the following holds.
\begin{enumerate}[\hspace{3mm}(1)]
\item $\mathcal{G}_{r}(D,\gamma)$ deformation retracts to $D\cup \gamma$.
\item We have
\begin{equation*}
\qquad\quad\mathcal{G}_{r}(D,\gamma)\setminus\bigcup_{p\in \partial\gamma} B_{\delta(r)}(p) = D \cup  N_{r}(\gamma)\setminus \bigcup_{p\in \partial \gamma} B_{\delta(r)}(p),
\end{equation*}
where $N_{r}(\gamma)$ denotes the (solid) $r$-tubular neighborhood of $\gamma$. The collection of balls $\{B_{\delta(r)}(p)\}_{p\in\partial \gamma}$ is disjoint.
\item  If $p\in \partial \gamma\setminus\partial D$ and $\gamma_p$ denotes the connected component of $\gamma$ containing $p$ as its endpoint, then
\begin{align*}
\mathcal{G}_{r}(D,\gamma)\cap B_{\delta(r)}(p)= 
CN_{r}(\gamma_p,p)\cap B_{\delta(r)}(p),  
\end{align*}
where $CN_{r}(\gamma_p,p)$ denotes the capped-off $r$-tube around $\gamma_p$ at $p$. 
\item The construction is local: If $(D\cup\gamma)\cap B_{\delta(r)}(p)=(\widetilde{D}\cup\widetilde{\gamma})\cap B_{\delta(r)}(p)$ for some $p\in \partial\gamma$, then $\mathcal{G}_r(D,\gamma)\cap B_{\delta(r)}(p)=\mathcal{G}_r(\widetilde{D},\widetilde{\gamma})\cap B_{\delta(r)}(p)$.
\end{enumerate}
Moreover, in the special case that $\gamma\cap B_{\delta(r)}(p)$ is a straight line and that $D\cap B_{\delta(r)}(p)=\bar{B}_R(q)\cap B_{\delta(r)}(p)$ for some $q\in \mathbb{R}^{n+1}$ and $R>r$, then $\mathcal{G}_r(D,\gamma)\cap B_{\delta(r)}(p)$ is given by the explicit rotationally symmetric construction from \cite[Prop.~4.2]{BHH1}.
\end{theorem}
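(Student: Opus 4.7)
The plan is to build $\mathcal{G}_{r}(D,\gamma)$ by a purely local construction, gluing three different model pieces inside a collection of disjoint balls $B_{\delta(r)}(p)$ centered at the endpoints $p\in\partial\gamma$, with $\delta(r)$ chosen so that $r \ll \delta(r) \ll b$. The disjointness of these balls is guaranteed by the distance conditions in Definition \ref{conrolled_conf}. Away from $\bigcup_{p\in\partial\gamma} B_{\delta(r)}(p)$, define $\mathcal{G}_{r}(D,\gamma) := D \cup N_{r}(\gamma)$, which already gives property (2). Two-convexity in this region is immediate: on $\partial D$ we have $\lambda_1+\lambda_2 \geq \beta c_H$ by hypothesis, while on $\partial N_{r}(\gamma)$ the principal curvatures are $(-\kappa_{\gamma}+O(r\kappa_{\gamma}^2),\, 1/r+O(r\kappa_{\gamma}^2),\ldots,1/r+O(r\kappa_{\gamma}^2))$, so since $|\kappa_{\gamma}|\leq b^{-1} \ll r^{-1}$, the inequality $\lambda_1+\lambda_2 > 0$ holds with a large quantitative margin.

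Inside each ball $B_{\delta(r)}(p)$ for a loose endpoint $p\in\partial\gamma\setminus\partial D$, I would cap off the tube using the standard cap $K^{\textrm{st}}$, placed rotationally symmetrically along the tangent direction $\gamma'(p)$. This yields exactly the capped-off tube $CN_{r}(\gamma_{p},p)$ of Definition \ref{cap_off_def}, giving property (3). Two-convexity in the cap follows from the standard concavity and quantitative estimates on the profile $u^{\textrm{st}}$ recorded in \cite[Def.~2.2, Prop.~3.10]{HK_surgery}. Inside each $B_{\delta(r)}(p)$ for an endpoint on $\partial D$, I would use the explicit rotationally symmetric tube-to-sphere model of \cite[Prop.~4.2]{BHH1} in the idealized situation that $\partial D \cap B_{\delta(r)}(p)$ is an exact spherical cap tangent to $\gamma$ at $p$.

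The main obstacle is passing from this rotationally symmetric idealisation to the general $(\mathbb{A},b)$-controlled configuration at endpoints on $\partial D$, while preserving 2-convexity, embeddedness and the matching conditions on $\partial B_{\delta(r)}(p)$. The key point is that in $B_{\delta(r)}(p)$ the actual data $(D,\gamma)$ differs from the symmetric model (a round ball of radius $R(p)$ comparable to $1/|A|(p)$ touched orthogonally by a straight tube of radius $r$) by $O(\delta(r)^2\, C_A + \delta(r)^2\, b^{-2})$ in $C^2$, by the definition of $\mathcal{D}_{\mathbb{A}}$ and $\mathcal{C}_{b}$ together with the orthogonality condition. One designs the symmetric model to be uniformly strictly 2-convex with a margin depending only on $(\mathbb{A},b)$; since this margin is $r$-independent, one can then choose $\bar{r}=\bar{r}(\mathbb{A},b)$ small enough that the quantitative perturbation from the model to the actual geometry is absorbed. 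Embeddedness on the boundary of $B_{\delta(r)}(p)$ matches by construction, and smoothness of the dependence on the data gives the required smoothness of $\mathcal{G}$.

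It remains to verify properties (1)--(4) together with the rigid-motion equivariance. Equivariance and the locality statement (4) follow because each of the three local pieces depends only on the local data of $(D,\gamma)$ in a slightly enlarged ball, and because away from the balls the construction is literally $D\cup N_{r}(\gamma)$. Property (2) is built into the definition. Property (3) follows by placing the standard cap canonically along $\gamma'(p)$. Property (1), the deformation retraction onto $D\cup\gamma$, is produced by first retracting the tubular piece $N_{r}(\gamma)\setminus D$ onto $\gamma$ along the normal disks, retracting each cap onto its endpoint along the level sets of $u^{\textrm{st}}$, and retracting $D$ onto itself; these retractions match along $\partial B_{\delta(r)}(p)$ by construction.
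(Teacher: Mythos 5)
Note first that this theorem is not proved in the present paper at all: it is quoted verbatim from \cite[Thm.~4.1]{BHH1}, together with its rotationally symmetric model \cite[Prop.~4.2]{BHH1}, so there is no proof in this article to compare against. Your proposal is therefore a reconstruction of the argument of \cite{BHH1}, and it does correctly capture the overall architecture used there: outside small balls around $\partial\gamma$ one takes literally $D\cup N_r(\gamma)$, at loose endpoints one attaches the standard cap $K^{\textrm{st}}$, at endpoints on $\partial D$ one uses an explicit rotationally symmetric tube-to-ball model, and properties (1)--(4) and equivariance are essentially built in by locality. The computation of the principal curvatures of $\partial N_r(\gamma)$ and the observation $1/r\gg b^{-1}\geq |\kappa_\gamma|$ are also correct and give two-convexity away from the gluing balls with a clean margin.

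The genuine gap is precisely the step you call ``absorbing'' the deviation between the actual data and the rotationally symmetric model. Two issues arise. First, the closeness estimate you quote is wrong in the stated norm: over a ball of radius $\delta(r)$ the graph of $\partial D$ differs from the tangent sphere by $O(\delta(r)^2 C_A)$ in $C^0$ and $O(\delta(r)\,C_A)$ in $C^1$, but only by $O(C_A)$ in $C^2$ --- the second fundamental forms differ by a quantity that does \emph{not} decay as $\delta(r)\to 0$. Likewise the curve deviates from a straight segment by $O(b^{-1})$ in $C^2$. Second, and more seriously, the model's two-convexity margin is \emph{not} uniformly bounded below by an $r$-independent constant in absolute terms: it is of order $1/r$ near the tube and of order $|A|(p)\leq C_A$ near $\partial B_{\delta(r)}(p)$, so the $O(C_A)$ curvature error is not automatically dominated everywhere. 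To make this work one must interpolate in a graded way --- use the model exactly where the curvature scale is high (so the error is negligible relative to the margin), match $\partial D$ exactly near $\partial B_{\delta(r)}(p)$ (so there is no error there), and in between choose an interpolation whose extra curvature is quantitatively controlled. That interpolation, including the verification that the boundary remains embedded and that the whole assignment is smooth in $(D,\gamma,r)$, is the main technical content of the proof in \cite{BHH1}, and a proof sketch needs at least to identify it as the crux rather than asserting that the margin is $r$-independent and the perturbation small.
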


\section{Transforming $2$-convex domains to marble graphs}\label{sec_marble}

Using the gluing map from Theorem \ref{thm_attachstrings}, we can now define marble graphs by extending our concept of a marble tree from \cite{BHH1}. 

\begin{definition}[Marble graph]\label{def_marblegraph}
A \emph{marble graph} with string radius $r_s$ and marble radius $r_m$ is a domain of the form $G:=\mathcal{G}_{r_s}(D,\gamma)$ such that
\begin{enumerate}
\item $D=\bigcup_i \bar{B}_{r_m}(p_i)$ is a union of finitely many balls (``marbles'') of radius $r_m$.
\item The curve $\gamma$ is such that:
\begin{itemize}
\item there are no loose ends, i.e. $\partial\gamma\setminus\partial D=\emptyset$,
\item $\gamma\cap \bar{B}_{3r_m}(p_i)$ is a union of straight rays for all $i$,
\end{itemize}
\end{enumerate}
If $D\cup \gamma$ is contractible, then $G$ is called a \emph{marble tree}. If $D\cup \gamma$ is homotopy equivalent to $S^1$ then $G$ is called a \emph{marble circuit}. 
\end{definition}

\begin{remark}\label{remark_moreover}
Note that by the moreover-part of Theorem \ref{thm_attachstrings} and by the structure of $(D,\gamma)$ as in Definition \ref{def_marblegraph}, $G$ does not depend on the precise value of the control parameters. We can thus freely adjust the control parameters and shrink $r_m, r_s$ whenever needed. 
\end{remark}

The goal of this section is to prove the following structure theorem for $2$-convex domains.

\begin{theorem}\label{thm_deform_to_marblegraph}
Every $2$-convex domain is $2$-convex isotopic to a marble graph.
\end{theorem}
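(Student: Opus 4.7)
The plan is to adapt the backwards induction along mean curvature flow with surgery from \cite{BHH1}, where the only genuinely new ingredient compared to the sphere case treated there is the handling of $\varepsilon$-loops that can now appear as discarded components. Given $K_0 \in \mathcal{D}$, I would run its mean curvature flow with surgery $\{K_t\}_{t\in[0,T]}$ from \cite[Thm.~1.21]{HK_surgery} until extinction at time $T$, with surgery and discarding occurring at finitely many times $0 < t_1 < \cdots < t_N \leq T$. Fix control parameters $\mathbb{A}$ and $b$ compatible with the canonical neighborhood theorem, and choose a string radius $r_s > 0$ much smaller than all surgery scales in the flow. The claim to prove by backwards induction on $i$ is that at every time $t \in [t_{i-1}, t_{i})$ each connected component of $K_t$ is $2$-convex isotopic to a marble graph. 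Between surgery/discarding times, smooth mean curvature flow is itself a $2$-convex isotopy (traversed in reverse) since it preserves $\mathbb{A}$-control, so it suffices to handle the discrete times $t_i$.

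For the base of the induction at a discarding time, the canonical neighborhood theorem \cite[Thm.~1.22]{HK_surgery} gives three possible types of discarded components. A convex sphere of controlled geometry is isotopic to a single marble, and a capped-off chain of $\varepsilon$-necks is isotopic to a marble tree; both cases are already established in \cite{BHH1}. In the genuinely new case of an $\varepsilon$-loop, my plan is first to deform it via a $2$-convex isotopy into an essentially uniform tube $N_\rho(\gamma)$ around a smooth $b$-controlled embedded closed curve $\gamma \subset \R^{n+1}$, and then to apply the marble-insertion procedure of \cite{BHH1} arc-locally along $\gamma$: on selected disjoint sub-arcs enlarge the tube into round balls of radius $r_m$ connected by the gluing map $\mathcal{G}_{r_s}$ from Theorem \ref{thm_attachstrings}, while shrinking the complementary tube segments to radius $r_s$. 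The locality property in Theorem \ref{thm_attachstrings}(4) and the explicit rotationally symmetric model from \cite[Prop.~4.2]{BHH1} ensure $2$-convexity throughout, and the topological type $S^{n-1}\times S^1$ is preserved automatically since the construction is arc-local on $\gamma$.

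For the inductive step at a surgery time $t_i$, assume that each component of $K_{t_i^+}$ is $2$-convex isotopic to a marble graph. A surgery excises an $\varepsilon$-neck and inserts two capped-off standard pieces; to undo it, insert a thin arc $\eta$ through the excised neck and apply $\mathcal{G}_{r_s}$. Because $r_s$ is much smaller than the surgery scale, $(K_{t_i^+}, \eta) \in \mathcal{X}_{\mathbb{A},b}$ is a controlled configuration and $K_{t_i^-}$ is $2$-convex isotopic to $\mathcal{G}_{r_s}(K_{t_i^+}, \eta)$. By the inductive hypothesis, $K_{t_i^+}$ is isotopic to a marble graph $G$, and the locality statement in Theorem \ref{thm_attachstrings}(4), together with Remark \ref{remark_moreover}, lets one transport $\eta$ across this isotopy to obtain a marble graph with one extra string, possibly closing a new loop if $\eta$ has both endpoints in the same component of $G$. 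This marble graph is $2$-convex isotopic to $K_{t_i^-}$, closing the induction.

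The main obstacle will be the initial normalization in the $\varepsilon$-loop case: passing from the weak $\varepsilon$-cylindrical control of an $\varepsilon$-loop to a uniform tube around a smooth $b$-controlled closed curve through $2$-convex domains. The delicate point is to perform this normalization while keeping $\lambda_1 + \lambda_2 > 0$ at every time. I would first regularize the core curve by damping its high-frequency oscillations before adjusting the tube radius, since tuning the radius function first on a wiggly core could drive $\lambda_1 + \lambda_2$ negative. Once this geometric regularization is established, the remaining marble-insertion step is essentially the circular analogue of the marble-tree construction from \cite{BHH1}, so it is this preliminary normalization of the $\varepsilon$-loop that I expect to require the most new geometric work.
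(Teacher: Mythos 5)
Your overall three-step structure (run mean curvature flow with surgery to extinction; isotope each discarded component to a marble tree or marble circuit; glue these isotopies together by backwards induction over the surgery times, using strings and the locality of $\mathcal{G}_{r_s}$) matches the paper's Steps 1--3 closely, and your description of the inductive step at a surgery time is essentially the paper's Claim \ref{claim_inductionstep}, modulo the small but necessary point that when the transported curve $\gamma_t$ hits the boundary of the exceptional set $X$ it must be rerouted as in \cite[Lem.~9.4]{BHH1}.

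Where you genuinely diverge from the paper is in the one new case, the $\varepsilon$-tubular loops. You propose to first deform the loop to a \emph{uniform} tube $N_\rho(\gamma)$ around a $b$-controlled closed curve and only then insert marbles arc-locally -- and you correctly flag this normalization as the delicate step, since tuning the radius function to a constant while preserving $\lambda_1+\lambda_2>0$ is not obviously possible and is not carried out anywhere in \cite{BHH1}. The paper's Lemma \ref{lemma_loops} avoids this entirely by re-using the surgery machinery: one chooses a maximal collection $\mathcal{I}$ of well-separated $\varepsilon$-neck points on the loop, performs a standard surgery at each (giving a post-surgery domain $K^\sharp$ whose components are small perturbations of capped-off cylinders), reconnects the opposing caps by almost straight strings $\widetilde\gamma$, and invokes Lemma \ref{lemma_neck_isotopy} to isotope the loop to $\mathcal{G}_{r_s}(K^\sharp,\widetilde\gamma)$. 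Each component of $K^\sharp$ becomes strictly convex instantaneously under mean curvature flow and hence is $2$-convex isotopic to a ball; tracking the strings by normal motion then yields a marble circuit. This route never needs the radius to be made uniform and never enlarges the tube, so the $2$-convexity issue you worry about does not arise. If you wish to pursue your normalization-first route you would need a separate lemma establishing it, whereas the surgery-based decomposition reduces directly to facts already established for capped $\varepsilon$-tubes.
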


\begin{proof}
The proof of this theorem uses three steps, generalizing the argument from our previous article \cite{BHH1}. Hence, at some points we only outline the argument and give precise references to steps that have been carried out already in our previous article.

\subsection*{Step 1: Mean curvature flow with surgery.} The theory of mean curvature flow with surgery was first introduced by Huisken-Sinestrari \cite{HuiskenSinestrari} for $2$-convex hypersurfaces $M^n\subset \mathbb{R}^{n+1}$ with $n\geq 3$. Later, a theory for surfaces was developed by Brendle-Huisken \cite{BrendleHuisken} and an approach which works in all dimensions was given by Haslhofer-Kleiner \cite{HK_surgery}. We follow the framework of the last mentioned article here.

Loosely speaking, a mean curvature flow with surgery starting at a $2$-convex domain $K_0$ is a collection of finitely many smooth $2$-convex mean curvature flows $\{K_t^i\subseteq \R^{n+1}\}_{t\in[t_{i-1},t_{i}]}$ (where $i=1,\ldots,\ell$ and $0=t_0<t_1<\ldots< t_\ell$), such that $K^1_{t_0} = K_0$ and $K_{t_i}^+ = K^{i+1}_{t_i}$ is obtained from $K_{t_i}^- = K^i_{t_i}$ by surgeries and/or discarding of connected components. More precisely, one first replaces finitely many (possibly zero) strong $\delta$-necks with center $p_i^j$ and radius $r_{\textrm{neck}}$ by pairs of opposing standard caps $K^{\textrm{st}}$, obtaining the post-surgery domain $K_{t_i}^\sharp$, see \cite[Def.~2.3 and Def.~2.4]{HK_surgery} for a detailed description. Then, one discards finitely many connected components of high curvature to obtain $K_{t_i}^-$. We refer to \cite[Def.~1.17]{HK_surgery} for a precise definition and to \cite[Thm.~1.21]{HK_surgery} for an existence result of such a mean curvature flow with surgery. By comparison with spheres, the flow always becomes extinct in finite time, that is $K_{t_\ell}^+=\emptyset$. In particular, at the last singular time $t_\ell$, there is only discarding of all the remaining components and no surgery.

The existence theorem is accompanied by the canonical neighborhood theorem, which gives a precise description of the regions of high curvature that are discarded. The upshot, see \cite[Cor.~1.25]{HK_surgery} and in particular its proof, is that all discarded components are diffeomorphic to $D^{n+1}$ or $D^{n}\times S^1$. Moreover, the components that are diffeomorphic to $D^{n+1}$ are either (a) convex or (b) capped $\eps$-tubes and the components diffeomorphic to $D^{n}\times S^1$ are (c) $\eps$-tubular loops. The precise definitions for the cases (b) and (c) are as follows.

\begin{definition}[Capped $\eps$-tubes and $\eps$-tubular loops]\label{def_tubes_loops}
\hfill
\begin{enumerate}
\item A \emph{capped $\eps$-tube} is a $2$-convex compact domain $K\subset\mathbb{R}^{n+1}$ diffeomorphic to a ball, together with a controlled connected curve $\gamma\subset K$ with endpoints on $\partial K$ such that:
\begin{enumerate}
\item If $\bar{p}_\pm$ denote the endpoints of $\gamma$ then $K\cap B_{2CH^{-1}(\bar{p}_\pm)}(\bar{p}_\pm)$ is $\eps$-close (after rescaling to unit size) to either (a)  a $(C,\eps)$-cap or (b) a standard-cap $K^{\textrm{st}}$. Here, a $(C,\eps)$-cap is a  strictly convex domain $K\subset\mathbb{R}^{n+1}$ such that every point outside some compact subset of size $C<\infty$ is the center of an $\eps$-neck of radius 1.
\item Every interior point $p\in \gamma$ with $d(p,\bar{p}_+)\geq CH^{-1}(\bar{p}_+)$ and $d(p,\bar{p}_-)\geq CH^{-1}(\bar{p}_-)$  is the center of an $\eps$-neck with axis given by $\partial_s\gamma(p)$. Moreover, if $r$ denotes the radius of the $\eps$-neck with center $p$, then $\gamma$ is  $\eps^{-2}r$-controlled in $B_{\eps^{-1}r}(p)$.
\end{enumerate}
\item An \emph{$\eps$-tubular loop} is a $2$-convex compact domain $K\subset\mathbb{R}^{n+1}$ which deformation retracts to $S^1$, together with a controlled closed curve $\gamma\subset K$ such that every point $p\in \gamma$ is the center of an $\eps$-neck with axis given by $\partial_s\gamma(p)$. Moreover, if $r$ denotes the radius of the $\eps$-neck with center $p$, then $\gamma$ is  $\eps^{-2}r$-controlled in $B_{\eps^{-1}r}(p)$.
\end{enumerate}
\end{definition}

By this theory, we can evolve any $2$-convex domain $K_0$ by mean curvature flow with surgery until it becomes extinct, preserving $2$-convexity and having a precise description of all the discarded components during the process. For a more detailed summary of the essential results described above, we refer to Sections 6 and 8 in our previous article \cite{BHH1}.

\subsection*{Step 2: Isotopies for surgery necks and discarded components.} In \cite{BHH1}, we explained how to geometrically undo the surgery by gluing the two surgery caps back together along a tiny string as described in the gluing construction in Theorem \ref{thm_attachstrings} above.

\begin{lemma}[Combination of Lemma 6.4 and Proposition 6.5 from \cite{BHH1}]\label{lemma_neck_isotopy}
For small enough $\delta$, if $K^\sharp$ is obtained from $K$ by replacing a strong $\delta$-neck (with center $0$ and radius $1$) by a pair of standard caps (with cap separation parameter $\Gamma$), then there is an almost straight line $\gamma$ between the tips of the standard caps and for $r_s$ small enough, there exists an isotopy between $K$ and ${\mathcal G}_{r_s}(K^\sharp,\gamma)$ that preserves $2$-convexity and is trivial outside $B_{6\Gamma}(0)$.
\end{lemma}

Lemma \ref{lemma_neck_isotopy} allowed us to construct an isotopy from a capped $\eps$-tube to a marble graph.

\begin{lemma}[Isotopy for capped $\eps$-tubes, Proposition 7.4  of \cite{BHH1}]\label{lemma_tubes}
For $\eps$ small enough, every capped $\eps$-tube (see Definition \ref{def_tubes_loops}) is $2$-convex isotopic to a marble tree. Moreover, there exists a finite collection $\mathcal{I}$ of $\eps$-neck points with $|p-q|\geq 100\max\{\eps^{-1},\Gamma\}\max\{H^{-1}(p),H^{-1}(q)\}$ for every pair $p,q\in \mathcal{I}$, such that the isotopy is monotone outside $\bigcup_{p\in \mathcal{I}}B_{6\Gamma H^{-1}(p)}(p)$.
\end{lemma}

Similar to this lemma, we now prove a version for $\eps$-tubular loops. This result was not needed for the argument in \cite{BHH1}, as there our topological assumption ruled out discarded components of this type.

\begin{lemma}[Isotopy for $\eps$-tubular loops]\label{lemma_loops}
For $\eps$ small enough, every $\eps$-tubular loop (see Definition \ref{def_tubes_loops}) is $2$-convex isotopic to a marble circuit. Moreover, there exists a finite collection $\mathcal{I}$ of $\eps$-neck points with $|p-q|\geq 100\max\{\eps^{-1},\Gamma\}\max\{H^{-1}(p),H^{-1}(q)\}$ for every pair $p,q\in \mathcal{I}$, such that the isotopy is monotone outside $\bigcup_{p\in \mathcal{I}}B_{6\Gamma H^{-1}(p)}(p)$.
\end{lemma}

\begin{proof}
In the following, we assume that $\eps$ and $r_s$ are small enough. We denote by $K$ an $\eps$-tubular loop as in Definition \ref{def_tubes_loops}. The isotopy from $K$ to a marble circuit is constructed in two steps. First, let $\mathcal{I}\subset \gamma$ be a maximal collection of $\eps$-neck points such that for any pair $p,q\in \mathcal{I}$ the distance between the points is at least $100\max\{\eps^{-1},\Gamma\}\max\{H^{-1}(p),H^{-1}(q)\}$.

For each $p\in\mathcal{I}$, we replace the $\eps$-neck with center $p$ by a pair of standard caps as described in \cite[Def.~2.4]{HK_surgery} and denote the post-surgery domain by $K^\sharp$. This is possible by \cite[Prop.~3.10]{HK_surgery}. Let $\widetilde{\gamma}$ be the disjoint union of almost straight curves connecting the opposing standard caps as in \cite[Lem.~6.4]{BHH1}. Note that $\widetilde{\gamma}$ is Hausdorff close to $\gamma \setminus K^\sharp$. Then, by Lemma \ref{lemma_neck_isotopy}, there exists a $2$-convex isotopy between $K$ and $\mathcal{G}_{r_s}(K^\sharp,\widetilde{\gamma})$.

Note that each connected component of $K^\sharp$ is a small perturbation of a capped-off cylinder (capped-off on both sides) and can therefore be deformed monotonically to a (slightly smaller) capped-off cylinder as described in Definition \ref{cap_off_def}. Then, letting this capped-off cylinder flow by mean curvature, it will instantaneously become strictly convex, so it is certainly $2$-convex isotopic to a round ball.

Let $\{K^{\sharp}_t\}_{t\in[0,1]}$ be the union of the above isotopies between the connected components of $K^{\sharp}$ and balls. Denote by $r_{\min}$ the smallest radius among these balls and let $\{K^{\sharp}_t\}_{t\in[1,2]}$ be an isotopy that concatenates smoothly at $t=1$ and shrinks all balls further to a smaller radius, say $r_{\min}/10$. Moreover, let $\{\widetilde{\gamma}_t\}_{t\in[0,2]}$ be the family of curves which follows $K^{\sharp}_t$ by normal motion starting at $\widetilde{\gamma}_0=\widetilde{\gamma}$. Then $\{\mathcal{G}_{r_s}(K^\sharp_t,\widetilde{\gamma}_t)\}_{t\in [0,2]}$ provides the second part of the $2$-convex isotopy we want to construct, transforming $\mathcal{G}_{r_s}(K^\sharp,\widetilde{\gamma})$ to a marble circuit.
\end{proof}

\begin{remark}
It is trivial to see that also discarded components of type (a) can be transformed to marble graphs. In fact, if $K\subset \mathbb{R}^{n+1}$ is a smooth compact convex domain, we can simply choose a single marble $\bar{B}_{r_m}\subset K$ and then $\{K_t\}_{t\in[0,1]}$ given by $K_t:=t \bar{B}_{r_m}+(1-t)K$ is a monotone convex isotopy (trivial outside $K$), with $K_0=K$ and $K_1=\bar{B}_{r_m}$.
\end{remark}

\subsection*{Step 3: Gluing isotopies together using backwards induction.}  We now fix $\eps$ small enough such that all applications of the previous lemmas as well as the application of \cite[Lem.~9.4]{BHH1} below are justified. Depending on $\eps$ and the parameters of the initial domain $K_0$ we then choose the surgery parameters from \cite{HK_surgery} precise enough, namely the neck precision parameter $\delta$ small enough, and the trigger-curvature, neck-curvature and thick-curvature, as well as their ratios, large enough.

Consider the evolution by mean curvature flow with surgery as described in Step 1, with initial condition $K_0$. Recall that at each $t_i$ finitely many $\delta$-necks with center $p_i^j$ and radius $r_{\textrm{neck}}$ are replaced by a pair of standard caps. Let $B_i^j:=B_{10\Gamma r_\textrm{neck}}(p_i^j)$, and observe that these balls are pairwise disjoint (see \cite[Prop.~2.5]{HK_surgery}). Similarly, for each discarded component $C_i^j$ which is either a capped $\eps$-tube or an $\eps$-tubular loop, by Lemma \ref{lemma_tubes} or Lemma \ref{lemma_loops}, respectively, there is a finite collection of $\eps$-neck points, whose centers and radii we denote by $p_{i}^{jk}$ and $r_{i}^{jk}$. We then set $B_i^{jk}:=B_{10\Gamma r_i^{jk}}(p_i^{jk})$. The isotopy which we will construct will be monotone outside the set of pairwise disjoint balls
\begin{equation}\label{exceptional_set}
X:=\bigcup_{i,j}B_i^j\cup \bigcup_{i,j,k}B_i^{jk}\, .
\end{equation}

Now, let $\A_i$ be the assertion that every connected component of $K^i = K_{t_{i}}^-$ is $2$-convex isotopic to a marble graph. Since at time $t_\ell$ there is only discarding and no surgery, we know that $\A_\ell$ holds, having shown in Step 2 that all discarded components are isotopic to marble graphs. We now prove the following (backwards) inductive step.

\begin{claim}\label{claim_inductionstep}
If $0<i<\ell$ and $\A_{i+1}$ holds, so does $\A_i$.
\end{claim}

\begin{proof}
Smooth evolution by mean curvature flow provides a monotone isotopy between $K_{t_i}^+$ and $K_{t_{i+1}}^-$. Recall that $K_{t_i}^+$ is obtained from $K^i = K_{t_{i}}^-$ by performing surgery on a collection of disjoint $\delta$-necks and/or discarding connected components that are entirely covered by canonical neighborhoods. By the inductive hypothesis, the connected components of $K_{t_{i+1}}^-$ and hence of $K_{t_i}^+$ are isotopic to marble graphs, and by Step 2 the discarded components are isotopic to marble graphs as well. It follows that all components of $K_{t_{i}}^\sharp$ are isotopic to marble graphs. 

Let $\{L_t\}_{t\in [0,1]}$ denote such an isotopy deforming $L_0=K_{t_{i}}^\sharp$ into a union of marble graphs $L_1$, which is monotone outside $X$. We now want to glue together the isotopies of the components. If there was only discarding at time $t_{i}$ there is no need to glue, hence we can assume that $L_0$ has at least two components.

For each surgery neck at time $t_i$, select an almost straight line $\gamma_i^j$ between the tips of the corresponding pair of standard caps and set $\gamma_0=\bigcup_{j}\gamma_i^j$. By Lemma \ref{lemma_neck_isotopy}, the domain $K^i=K_{t_i}^-$ is isotopic to $\mathcal{G}_{r_s}(K_{t_{i}}^\sharp,\gamma_0)$ via $2$-convex domains with an isotopy that is trivial outside $X$. Finally, to get an isotopy $\mathcal{G}_{r_s}(L_t,\gamma_t)$ it remains to construct a suitable family of curves $\{\gamma_t\}_{t\in [0,1]}$ along which we can do the gluing. We start with $\gamma_0$ and then essentially define $\gamma_t$ by following the points where $\gamma_t$ touches $\partial L_t$ via normal motion. It can happen at finitely many times $t$ that $\gamma_{t}$ hits $\partial X$. In this case, we modify $\gamma_t$ according to \cite[Lem.~9.4]{BHH1} to avoid the surgery regions. Then $\mathcal{G}_{r_s}(L_t,\gamma_t)_{t\in [0,1]}$ gives the desired $2$-convex isotopy.
\end{proof}

By backwards induction on $i$, we then in particular obtain that $\A_1$ holds, i.e. $K^1 = K_{t_{1}}^-$ is $2$-convex isotopic to a marble graph. Finally, smooth mean curvature flow provides a $2$-convex isotopy between $K^1$ and $K_0$ (in particular, $K^1$ has only one connected component). We conclude that $K_0$ is $2$-convex isotopic to a marble graph, proving the theorem.
\end{proof}

\section{Deforming marble circuits and conclusion of the proof}\label{sec_circuit}

We will now prove the following deformation theorem for marble circuits.

\begin{theorem}[Marble circuit isotopy]\label{thm_marble_circuit}
Every marble circuit is $2$-convex isotopic to a (solid) thin torus $N_r(\gamma)$, where $\gamma$ is a closed embedded curve and $r$ is arbitrarily small.
\end{theorem}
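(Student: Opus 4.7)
The strategy is to normalize the marble circuit to a rotationally symmetric "necklace" profile around a smooth closed curve, and then interpolate this profile to a uniform thin tube. Let $G = \mathcal{G}_{r_s}(D,\gamma)$ be a marble circuit with $D = \bigcup_i \bar{B}_{r_m}(p_i)$; since $D \cup \gamma$ has the homotopy type of $S^1$, the curve $\gamma$ consists of arcs forming a single cycle through the marbles, and each marble has exactly two straight ray-segments of $\gamma$ attached to it.

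I would first perform a smooth deformation of $\gamma$ (moving the strings between marbles, while keeping $(D, \gamma_t) \in \mathcal{X}_{\mathbb{A},b}$ throughout, possibly after adjusting the control parameters) so that the two rays meeting each $p_i$ become collinear, forming a single straight diameter through $p_i$. Applying the gluing map $\mathcal{G}_{r_s}$ to this family gives a $2$-convex isotopy by Theorem \ref{thm_attachstrings}. After straightening, by the moreover-part of Theorem \ref{thm_attachstrings}, $\partial G$ agrees near each $p_i$ with the explicit rotationally-symmetric construction from \cite{BHH1}: a sphere of radius $r_m$ joined via two opposing standard caps to cylinders of radius $r_s$ along a common axis. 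I would then choose a smooth embedded closed curve $\sigma: S^1 \to \mathbb{R}^{n+1}$ passing through each $p_i$ with tangent direction equal to this common axis and otherwise following $\gamma$, so that $G$ admits a description as a tube around $\sigma$ with radius profile $\rho_0(s)$ equal to $r_s$ generically and bulging up to $r_m$ via the standard-cap profile near each marble.

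The main step is then to deform $\rho_0$ to a constant profile $\rho_1 \equiv r$, for arbitrarily small $r > 0$, through a smooth monotone-decreasing family $\{\rho_t\}_{t\in[0,1]}$ such that each tube of radius $\rho_t$ around $\sigma$ is $2$-convex. For a tube in $\mathbb{R}^{n+1}$ around a smooth curve, the $n-1$ cross-sectional principal curvatures equal $1/\rho_t$, and the $2$-convexity condition reduces (to leading order, for thin profiles) to the surface-of-revolution inequality $1 + (\rho_t')^2 > \rho_t \rho_t''$; the correction from the curvature of $\sigma$ is negligible once the profile is thin enough relative to the curvature scale of $\sigma$. Since $\rho_t$ and $|\rho_t''|$ can be arranged to decrease at the transition regions where the inequality is tightest, an appropriately gradual interpolation preserves $2$-convexity throughout, and the endpoint $t=1$ is the desired tube $N_r(\sigma)$.

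\textbf{Main obstacle.} The principal technical difficulty lies in verifying $2$-convexity throughout the profile homotopy, particularly at the "shoulder" regions connecting the spherical cap to the cylindrical string, where the standard-cap profile has $\rho_t'' > 0$ and the inequality $1 + (\rho_t')^2 > \rho_t \rho_t''$ is not automatic. The fix is to perform the deformation in sub-stages: first soften the transitions (decreasing $|\rho_t''|$ while keeping the bulge height fixed), then shrink the bulge amplitude, using the explicit form of $u^{\textrm{st}}$ to control the estimates. The final tube radius $r$ can then be made arbitrarily small by a monotone inward rescaling, which is manifestly $2$-convex for thin enough tubes around a smooth curve.
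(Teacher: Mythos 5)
Your proposal begins from the assertion that, for a marble circuit $G=\mathcal{G}_{r_s}(D,\gamma)$, ``the curve $\gamma$ consists of arcs forming a single cycle through the marbles, and each marble has exactly two straight ray-segments of $\gamma$ attached to it.'' This is false in general. The definition only requires $D\cup\gamma$ to be homotopy equivalent to $S^1$; a marble circuit can therefore have tree branches hanging off the cycle, i.e.\ marbles that are leaves (a single attached ray) and marbles that are branch points (three or more attached rays). Your plan — straighten the two rays at each marble into a common diameter, obtain a radius profile over a single closed curve $\sigma$, then deform the profile — simply does not apply to such configurations, and you never argue how to remove the extraneous branches. This is precisely what the paper handles first: it invokes the rearrangement lemma and the marble-reduction lemma from \cite{BHH1} (Lem.~5.3 and Prop.~5.5 there) to repeatedly prune leaves, reducing to a marble circuit in which each marble meets $\gamma$ in exactly two \emph{antipodal} points. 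Only after this pruning does the ``necklace over a curve $\sigma$'' picture you describe actually hold.

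\textbf{Secondary issue.} Even in the pure-cycle case, you claim one can ``perform a smooth deformation of $\gamma$ \dots\ so that the two rays meeting each $p_i$ become collinear'' while remaining an $(\mathbb{A},b)$-controlled configuration, and then compose with $\mathcal{G}_{r_s}$. Sliding an attachment point around $\partial\bar B_{r_m}(p_i)$ toward the antipode of the other attachment point is not obviously possible without collisions or loss of control, and the paper does not do it this way: antipodality is an \emph{output} of the reduction lemma, not achieved by an ad hoc sliding isotopy. You would need to justify this step (e.g.\ by a version of the rearrangement argument) rather than treat it as routine.

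Once these structural points are in place, your subsequent analysis — using the moreover-part of Theorem~\ref{thm_attachstrings} to identify the local profile with the explicit rotationally symmetric model, then interpolating the bulging profile to a uniform thin one while checking the surface-of-revolution two-convexity inequality $1+(\rho')^2>\rho\rho''$ in stages — is aligned with the paper's final steps (the $C_\delta$ family from \cite[Prop.~4.2]{BHH1} together with the linear interpolation of \cite[Prop.~3.12]{BHH1}). So the tail of the argument is essentially right; the missing piece is the pruning of leaves and branch points.
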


\begin{proof}
The proof is a generalization of the proof of \cite[Thm.~5.2]{BHH1}, where we showed that every marble tree is $2$-convex isotopic to a round ball. Our previous proof was based on two basics steps: rearrangements and marble reduction. Given some $p\in D\cap \gamma$, the rearrangement step allowed us to push all other curves out of the hemisphere with pole $p$ (see \cite[Lem.~5.3]{BHH1}). The reduction step allowed us (after applying rearrangements) to remove a leaf of the marble tree via $2$-convex isotopies (see \cite[Prop.~5.5]{BHH1}).

By repeatedly choosing a leaf, rearranging and applying reduction, we thus see that every marble circuit is isotopic to a marble circuit $\mathcal{G}_{r_s}(D,\gamma)$ with the property that for every connected component $B$ of $D$, $B\cap \gamma$ consists exactly of two antipodal points.

After possibly shrinking $r_m$ and $r_s$ as in Remark \ref{remark_moreover} and then rescaling the configuration so that $r_m=1$, the moreover-part of Theorem \ref{thm_attachstrings} implies that for each $p\in B\cap \gamma$ the configuration around $p$ is given by the explicit rotationally symmetric gluing model $C_{\varrho^{-1}(r_s)}$ from \cite[Prop.~4.2]{BHH1}. We can then use the isotopy $(C_{\delta})_{\delta\in (\varrho^{-1}(r_s),0.99)}$ around $p$, while increasing the string radius $r_s$ to $\varrho(0.99)$ elsewhere. Finally, using a linear isotopy (see \cite[Prop.~3.12]{BHH1}), this can be perturbed to a solid tube of radius one, which we can deform radially to an arbitrarily thin torus.
\end{proof}

We can now finish the proof of our main theorem.

\begin{proof}[Proof of Theorem \ref{thm_main}]
Let $T,T'\subset \mathbb{R}^{n+1}$ be two $2$-convex tori. If $n=2$, we assume in addition that $T$ and $T'$ have the same knot class. We recall from Section \ref{sec_prelim}, the knot class of a torus is invariant under isotopies.

By Theorem \ref{thm_deform_to_marblegraph} and the topological assumption, $T$ and $T'$ are $2$-convex isotopic to marble circuits $C$ and $C'$, respectively. By Theorem \ref{thm_marble_circuit} the marble circuits $C$ and $C'$ are $2$-convex isotopic to (solid) thin tori $N_r(\gamma)$ and $N_r(\gamma')$, for some closed embedded curves $\gamma, \gamma'$. In the case $n=2$, there exists and ambient isotopy deforming $\gamma$ to $\gamma'$ since we assumed that $T$ and $T'$ have the same knot class. In the case $n\geq 3$, there also exists an ambient isotopy deforming $\gamma$ to $\gamma'$ since there are no nontrivial knots in $\mathbb{R}^{n+1}$ for any $n\geq 3$. Finally, by choosing $r$ very small, it is easy to see that such an ambient isotopy gives rise to a $2$-convex isotopy between $N_r(\gamma)$ and $N_r(\gamma')$.
\end{proof}

\bibliography{BHH-tori}

\bibliographystyle{abbrv}

\vspace{10mm}
Reto Buzano: r.buzano@qmul.ac.uk\\
{\sc School of Mathematical Sciences, Queen Mary University of London, Mile End Road, London E1 4NS, UK}\\

Robert Haslhofer: roberth@math.toronto.edu\\
{\sc Department of Mathematics, University of Toronto, 40 St George Street, Toronto, ON M5S 2E4, Canada}\\

Or Hershkovits: orher@stanford.edu\\
{\sc Department of Mathematics, Stanford University, 450 Serra Mall, Building 380, Stanford, CA 94305-2125, USA}\\

\end{document}